\newtheorem{remark}{Remark}
\newtheorem{theorem}{Theorem}
  \let\NAT@parse\undefined
\title{\LARGE \bf
Optimization Method Based On Optimal Control 
%
}
\author{Yeming Xu, Ziyuan Guo, Hongxia Wang, and Huanshui Zhang$^*$, Senior Member, IEEE
\thanks{This work was supported by the Foundation for Innovative Research Groups of the National Natural Science Foundation of China (61821004), Major Basic Research of Natural Science Foundation of Shandong Province (ZR2021ZD14),  High-level Talent Team Project of Qingdao West Coast New Area (RCTD-JC-2019-05), Key Research and Development Program of Shandong Province (2020CXGC01208), and Original Exploratory Program Project of National Natural Science Foundation of China (62250056) (Corresponding author: Huanshui Zhang).
}
\thanks{Yeming Xu, Ziyuan Guo, Hongxia Wang, and Huanshui Zhang are with the College
	of Electrical Engineering and Automation, Shandong University of Science
	and Technology, Qingdao, 266590, China (e-mail: ymxu2022@163.com; skdgzy@sdust.edu.cn;
	whx1123@163.com; hszhang@sdu.edu.cn).
	      }
}
\begin{document}

\maketitle
\thispagestyle{empty}
\pagestyle{empty}

\begin{abstract}
In this paper, we focus on a method based on optimal control to address the optimization problem. The objective is to find the optimal solution that minimizes the objective function.  We transform the optimization problem into optimal control by designing an appropriate cost function. Using Pontryagin's Maximum Principle and the associated forward-backward difference equations (FBDEs), we derive the iterative update gain for the optimization. The steady system state can be considered as the solution to the optimization problem. Finally, we discuss the compelling characteristics of our method and further demonstrate its high precision, low oscillation, and applicability for finding different local minima of non-convex functions through several simulation examples.
\end{abstract}
\section{INTRODUCTION}
Optimization problems, which involve the search for the minimum of a specified objective function, play a crucial role in various fields, including engineering, economics, machine learning, etc. \cite{luenberger1984linear}. Optimization methods are the basis for solving various optimization problems such as system identification and optimal control. Therefore, optimization problems have attracted extensive attention in various fields over the past few centuries, leading to significant advancements as follows:
%

Gradient descent stands as the oldest, most predominant, and most effective first-order method for tackling optimization problems. Its simplicity captured widespread attention upon its inception.
Typical gradient descent techniques include exact and inexact line search \cite{boyd2004convex,nocedal1999numerical}, and more. With the development of artificial intelligence technology, gradient descent has gained new vitality. This revival includes the emergence of various techniques tailored to different optimization needs.
These encompass batch gradient descent (BGD) \cite{ruder2016overview}, which operates on the entire training set, and mini-batch gradient descent (MBGD) \cite{huo2017asynchronous}, which processes subsets of training data.
 Notably, stochastic gradient descent (SGD) \cite{bottou2010large}, using a data in training set in each update, has also made a significant impact. Furthermore, the development of optimization methods has introduced enhancements to the traditional gradient descent approach such as Momentum gradient descent \cite{qian1999momentum}, Nesterov Momentum gradient descent \cite{o2015adaptive}, AdaGrad gradient descent \cite{duchi2011adaptive}, Adam gradient descent \cite{kingma2014adam}, to name a few. Nevertheless, gradient descent still faces issues such as slow convergence near extremal points, susceptibility to oscillations, and difficulty in finding optimal points.

 Newton's method emerges as the most basic and effective second-order method for solving optimization problems. Owing to its exceptional precision and fast convergence, it is very favorable, leading to various improved versions of Newton's method, including modified Newton's method \cite{fletcher1977modified}, damped Newton's method \cite{sano2019damped}, and quasi-Newton methods \cite{broyden1967quasi,gill1972quasi}. Notable algorithms in this category are DFP \cite{broyden1970convergence}, BFGS \cite{broyden1970convergence2}, and L-BFGS \cite{liu1989limited}. In recent years, Newton's method and its improved versions have also been widely applied in training neural networks for machine learning and solving large-scale logistic regression problems \cite{setiono1995use,goldfarb2020practical,lin2007trust}. However, Newton's method and its variants may encounter the following challenges: (a) the need for an initial value close enough to the extremal point, or it may diverge; (b) strict requirements for the objective function, necessitating second-order partial derivatives; (c) for multivariable optimization, the calculation of the inverse matrix of the Hessian matrix is computationally burdensome.

There are several other algorithms like Conjugate Gradient \cite{luenberger1984linear} and Evolutionary Algorithms \cite{back1996evolutionary}, used for addressing optimization problems. We won't list them exhaustively. Unfortunately, these algorithms, while valuable in many situations, may still present some challenges such as slow convergence, oscillations during convergence, susceptibility to divergence, applicability only to functions with specific structures or under certain algorithm parameter settings, and inefficiency in handling non-convex optimization.

Unlike gradient descent and Newton's method, we propose a novel optimization idea for the optimization problem by addressing a new optimal control problem. It aims to design an optimal controller to regulate a first-order difference equation such that the cost function, closely related to the objective function, is minimized.
 The optimal trajectory of the system to rapidly approach the local minimum point of the original optimization objective. This method offers relatively flexible initial value selection, fast convergence speed, effective avoidance of oscillations observed in gradient methods, and does not require the computation of second-order partial derivatives of the original optimization objective function. Particularly, for some nonconvex functions, different local minimum points can be obtained by adjusting the input weight matrix of the optimal control problem, provided that the initial value is chosen properly. It's worth noting that the selection of the input weight matrix does not lead to divergence and oscillations compared to conventional algorithms.

We use standard notation: $\mathbb{R}^n$ is the set of n-dimensional real vectors; $\mathbb{S}^n_{++}$ is
the set of positive definite symmetric matrices; $I_n$ is the n-dimensional identity matrix; $A \succ B (A\prec B)$ means that the matrix $A-B$ is positive (negative) definite, we said A is larger (smaller) than B; $\nabla f({x})$ and $\nabla^2 f({x})$ denote the gradient and the hessian matrix of $f({x})$.

The remainder of this paper is organized as follows. In Section II, we formulate the optimization problem as an optimal control problem. In Section III, we approach it by Pontryagin's Maximum Principle and summarize the characteristics of our proposed method. In Section IV, we conduct simulations to validate our results in both convex and un-convex settings. Concluding remarks of Section V complete the paper.

\section{PROBLEM FORMULATION}
Consider the optimization problem
\begin{equation}
		\mathop {\rm minimize}\limits_{x \in {\mathbb{R}^n}} f(x), \label{1}
\end{equation}
where $f:{\mathbb{R}^n} \to \mathbb{R}$ is a nonlinear function. The objective function $f(x)$ is assumed to be twice continuously differentiable on ${\mathbb{R}^n}$.

Numerous algorithms have been developed to address the minimization problem, many of which are grounded in the principle of gradient descent \cite{ruder2016overview} i.e.,
\begin{equation}
{x_{k + 1}} = {x_k} + \eta\nabla f({x_k}),\label{2}
\end{equation}where $\eta$ is step size.

On one hand, it has been demonstrated in convex optimization that the global optimal solution can be obtained using (\ref{2}) \cite{boyd2004convex}. However, gradient descent is highly sensitive to the choice of step size. The smaller step size ensures convergence during the iterative process, but this comes at the expense of sacrificing convergence speed. Conversely, the larger step size may easily lead to oscillations and divergence during the iterative process. One of the most fundamental methods for determining the step size is line search criterion \cite{nocedal1999numerical}.

On the other hand, in the case of non-convex optimization, the search for the global optimal solution remains challenging. Different initial points and algorithms may lead to different local optimal solutions or oscillate and diverge in the iterative process. Moreover, there are often fewer guarantees to prove the existence and properties of an optimal solution, making algorithm design and analysis more complex. Due to the inherent challenges of effectively solving non-convex optimization problems, the primary methods currently employed to address such problems include: (a) Find problems with implicit convexity, or solve them by convex reconstruction. (b) The target from finding global solution changes for a stationary point or local extremum points. (c) Consider a class of non-convex problems that can provide global performance guarantees, such as satisfying the Polyak-Łojasiewicz condition \cite{danilova2022recent,polyak1963gradient}.

Different from the traditional optimization method such as (\ref{2}), this paper will present a novel idea by transforming the optimization into an optimal control theory. The detailed formulation is as follows: 

We consider the discrete-time linear time-invariant system 
\begin{equation}
	{x_{k + 1}} = {x_k} + {u_k}, \label{3}
\end{equation}
where $x_k$ is the $n$-dimensional state, $u_k$ is the $n$-dimensional control, which can indeed be perceived as an iterative update gain, which is to be further specified later. We transform the task of finding solutions to problem (\ref{1}) into the updating of the state sequence {${x_k}$} within the optimal control problem, i.e.,
\begin{equation}
		\begin{array}{l}
		{\rm minimize} {\rm{ }}\sum\limits_{k = 1}^N {(f({x_k}) + \frac{1}{2}u_k^\mathrm {T}} Ru_k^{}) + f({x_{N + 1}}),\\
		{\rm subject\ to}{\rm {\; (3)}},
\end{array}\label{4}
\end{equation}
where the initial condition ${x_0}$ is given, $N$ is the time horizon. The terminal cost is $f({x_{N+1}})$ and the control weighted matrix $R\in\mathbb{S}^n_{++}$. The goal of the optimal control problem is to find an admissible control sequence \{${u_k}$\} which minimizes the long-term cost.

	As mentioned earlier, we consider the solution \{${u_k}$\} of problem (\ref{4}) as the variation in sequence \{${x_k}$\} from ${x_0}$ to ${x^*}$, whereas our objective is to attain the steady state ${x^*}$.

\begin{remark}
It's readily apparent from (\ref{4}) that we reduced the accumulation of ${f(x_k)}$ and ${u_k^ \mathrm {T} R{u_k}}$.
 This signifies that we will strike a balance between minimizing control energy consumption and reaching the minimum value of $f(x_k)$.
 Considering the update formula (\ref{3}) for ${x_k}$, the control sequence \{${u_k}$\} must guide ${x_k}$ toward the local minimum point of ${f(x_k)}$ with small control energy consumption.  This effectively establishes a connection with the optimization problem. A more detailed discussion will be conducted in Section \ref{123}.
\end{remark}
\section{OPTIMIZATION MOTHOD USING OPTIMAL CONTROL }
In this section, we will solve the optimal control problem (\ref{4}) by applying Pontryagin's Maximum Principle\cite{pontryagin2018mathematical}. 
The resulting optimal steady state of system \eqref{3} can recover one of the local minimum point of optimization problem (\ref{1}).
All minimum points can always be obtained by adjusting the input weight matrix $R$ of the optimal control problem \eqref{4}.

\subsection{Analytical Solution} 

Because the optimal control problem (\ref{4}) essentially focuses on finding $u_k$ to minimize $f(x_k)$ and use as energy $u_k^{T} Ru_k$ as possible. The optimal state of problem (\ref{4}) can be used to describe a local minimum point of problem (\ref{1}). This establishes a connection between the optimization problem and the optimal control problem.
Then, we will apply the optimal control theory to solve the problem (\ref{4}), leading to the following theorem.
\begin{theorem}	\label{th1}
	The local minimum point of problem (\ref{1}) can be characterized by the following update relation:
	\begin{equation}
		{x_{k + 1}^*} = {x_k^*} + {u_k^*},	x_0^* = x_0,\label{6}
	\end{equation}where 
	\begin{equation}
		u_k^* =  - {{R^{ - 1}}}\sum\limits_{i = k + 1}^{N + 1}  \nabla f(x_i^*).\label{5}
	\end{equation}
\end{theorem}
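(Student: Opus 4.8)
The plan is to apply the discrete-time version of Pontryagin's Maximum Principle directly to problem~(\ref{4}), treating the dynamics~(\ref{3}) as equality constraints. First I would introduce a costate (adjoint) sequence $\{\lambda_k\}$ and form the augmented Lagrangian
\[
J = \sum_{k=1}^{N}\Big(f(x_k) + \frac{1}{2}u_k^\mathrm{T} R u_k\Big) + f(x_{N+1}) + \sum_{k=0}^{N}\lambda_{k+1}^\mathrm{T}\big(x_k + u_k - x_{k+1}\big).
\]
Because the running cost is separable and the dynamics are affine, the first-order stationarity conditions decouple stage by stage, so the whole argument reduces to differentiating $J$ in $u_k$ and $x_k$ and then solving the resulting recursion.

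Next I would extract the three groups of optimality conditions. Stationarity in $u_k$ gives $R u_k^* + \lambda_{k+1} = 0$, i.e.\ the control--costate relation $u_k^* = -R^{-1}\lambda_{k+1}$. Stationarity in $x_k$ for the interior indices $k = 1,\dots,N$ yields the costate recursion $\lambda_k = \lambda_{k+1} + \nabla f(x_k^*)$, while stationarity at the terminal index, where only the terminal cost $f(x_{N+1})$ and the constraint term carrying $\lambda_{N+1}$ appear, supplies the transversality condition $\lambda_{N+1} = \nabla f(x_{N+1}^*)$. Equation~(\ref{6}) is then nothing but the forward dynamics~(\ref{3}) restated for the optimal pair.

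The decisive step is to solve the backward costate recursion in closed form. Since $\lambda_k = \lambda_{k+1} + \nabla f(x_k^*)$ is affine with the unit gain inherited from~(\ref{3}), it telescopes: iterating downward from the terminal condition gives $\lambda_k = \sum_{i=k}^{N+1}\nabla f(x_i^*)$, which I would confirm by a short downward induction on $k$ (base case $k=N+1$ matching $\lambda_{N+1}=\nabla f(x_{N+1}^*)$). Substituting $\lambda_{k+1} = \sum_{i=k+1}^{N+1}\nabla f(x_i^*)$ into $u_k^* = -R^{-1}\lambda_{k+1}$ then produces exactly~(\ref{5}).

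I expect the only real obstacle to be bookkeeping rather than mathematics: keeping the summation limits and the shift between $\lambda_k$ and $\lambda_{k+1}$ consistent when the index-shifted constraint sum $\sum_{k=0}^{N}\lambda_{k+1}^\mathrm{T} x_{k+1}$ is relabelled, and ensuring the terminal cost is absorbed into the boundary value $\lambda_{N+1}$ rather than into the interior recursion. Since PMP delivers only necessary conditions, the fact that a steady state forces $\lambda_{k+1}\to 0$ and hence $\nabla f(x^*)=0$ is what justifies interpreting the resulting $x^*$ as a local minimum point of~(\ref{1}).
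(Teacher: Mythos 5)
Your proposal is correct and follows essentially the same route as the paper: form the Hamiltonian/Lagrangian, apply the discrete Pontryagin conditions to get the stationarity relation $Ru_k^*+\lambda_{k+1}^*=0$, the costate recursion $\lambda_k^*=\nabla f(x_k^*)+\lambda_{k+1}^*$ with transversality $\lambda_{N+1}^*=\nabla f(x_{N+1}^*)$, telescope the recursion to $\lambda_{k+1}^*=\sum_{i=k+1}^{N+1}\nabla f(x_i^*)$, and substitute. The only cosmetic difference is that you write out the augmented Lagrangian explicitly where the paper works directly with the Hamiltonian, and your closing observation that a steady state forces $\nabla f(x^*)=0$ is a reasonable addition the paper leaves implicit.
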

\begin{proof}
	Based on the aforementioned relationship, to solve problem (\ref{4}), define the \textit{Hamiltonian} :
	\begin{equation}
		H({x_k},{u_k},{\lambda _{k + 1}}) = f({x_k}) + \frac{1}{2}u_k^ \mathrm {T} R{u_k} + 
		\lambda _{k + 1}^ \mathrm {T} ({x_k} + {u_k}),\label{7}
	\end{equation}where ${\lambda _k}$ is the \textit{n}-dimensional costate.
	Indeed, the costate $\lambda_{k}$ assumes the function of Lagrange multipliers \cite{li2017maximum}.
	
	\noindent By applying the Pontryagin's Maximum Principle, we can derive the following FBDEs:
	
\begin{equation}
				{x_{k + 1}^*} = {x_k^*} + {u_k^*},\label{8}
	\end{equation}
\begin{equation}
		\lambda _k^* = \nabla f(x_k^*) + \lambda _{k + 1}^* ,\label{9} 
	\end{equation}
\begin{equation}
		x_0^* = x_0, \lambda_{N+1}^* = \nabla f(x_{N+1}^*), \label{10}
	\end{equation}along with the equilibrium condition
\begin{equation}
	Ru_k^* + \lambda _{k + 1}^* = 0.\label{12}
\end{equation}
Let $k\leftarrow k+1$, utilizing the iterative equation (\ref{9}) and terminal condition (\ref{10}), we have
\begin{equation}
	\lambda _{k+1}^* = \sum\limits_{i = k + 1}^{N + 1}  \nabla f(x_i^*).\label{13} 
\end{equation}

\noindent By substituting (\ref{13}) into (\ref{12}), the optimal controller admits:
	\begin{equation}
		u_k^* =  - {{R^{ - 1}}}\sum\limits_{i = k + 1}^{N + 1}  \nabla f(x_i^*).\label{14}
	  \end{equation}The proof is now completed.
	\end{proof}
\begin{remark}
Because of noncausality, it is not used to obtain the optimal state directly.
\end{remark}
\begin{remark}
Each local minimum point can be associated with the optimal control problem (4) of different input weight matrix $R$. In contrast, gradient descent method finds various minimum points by adjusting the step size blindly.  
\end{remark}

\subsection{Numerical Solution} \label{555}
It's hard to calculate (\ref{6})-(\ref{5}) analytically. However, the numerical calculation can be achieved by solving the FBDEs. Enlightened by \cite{li2017maximum}, we thus provide a numerical solution algorithm, which is summarized as follows:

\begin{algorithm}
	\renewcommand{\algorithmicrequire}{\textbf{Input:}}
	\renewcommand{\algorithmicensure}{\textbf{Output:}}
	\caption{The numerical algorithm for solving problem (\ref{4})}
	\label{alg1}
	\begin{algorithmic}[1]
		\STATE Initialization: \{$u_k^0$\}, $k=0,1,...,N$ , $x_0$, $\alpha$, $t \leftarrow 0$, $\varepsilon$ 
		\REPEAT
		\STATE Forward Update \{$ x_{k}^{t} $\} based on Equation~(\ref{17})
		\STATE Backward Update \{$\lambda _{k}^{t}$\} based on Equation~(\ref{18})
		\STATE Calculating $\frac{{\partial H(x_k^t,u_k^t,\lambda _{k + 1}^t)}}{{\partial u_k^t}}$ from \{$ x_{k}^{t} $\} and \{$\lambda _{k}^{t}$\}.
		\STATE Update 
		\begin{equation}
			u_k^{t + 1} = u_k^t - \alpha \frac{{\partial H(x_k^t,u_k^t,\lambda _{k + 1}^t)}}{{\partial u_k^t}} \label{19}
		\end{equation}
		\STATE $t \leftarrow t + 1$   
		\UNTIL $||\frac{{\partial H(x_{k}^{t},u_{k}^{t},\lambda _{k+1}^{t})}}{{\partial u_{k}^{t}}}|| < \varepsilon $
		\ENSURE  \{$ x_{k}^t $\}, \{$u_k^t$\}
	\end{algorithmic}  
\end{algorithm}
During the initialization phase, a set of control sequences \{$u_k^0$\}, step size $\alpha$, and error $\varepsilon$ are given, and the initial state $x_0$ is known. Using the forward equation
\begin{equation}
	{x_{k + 1}^t} = {x_k^t} + {u_k^t}, x_0^t=x_0.\label{17}
\end{equation}$\{x_k^t\}$ can be acquired.

Subsequently, \{$\lambda _{k}^{t}$\} can be computed based on \{$ x_{k}^{t} $\} and the backward equation 
\begin{equation}
	\lambda _k^t = \nabla f(x_k^t) + \lambda _{k + 1}^t, \lambda_{N+1}^t = \nabla f(x_{N+1}^t).\label{18} 
\end{equation}

According to \{$ x_{k}^{t} $\}, \{$\lambda _{k+1}^{t}$\} and $\frac{{\partial H(x_k^t,u_k^t,\lambda _{k + 1}^t)}}{{\partial u_k^t}}$, the new sequences \{$u_k^{t+1}$\} can be obtained from (\ref{19}). This iterative process continues until the algorithm converges. Upon the completion of this algorithm, we can use the control sequences along with (\ref{3}) to compute \{$ x_{k}$\}, and the steady state $x^*$ can then be regarded as the solution to the optimization problem (\ref{1}).

\subsection{Dissussion of The Proposed Optimization Method} \label{123}
In this subsection, we discuss the compelling characteristics of solving optimization problems using the optimal control theory as follows:
\begin{itemize}
	
	\item 
	The selection of our input weight matrix $R$ will not result in divergence of \{$x_k$\}. When $R$ is smaller, $x_k$ can converge to the global or local minimum points with few update iterations. 
From (\ref{19}), our method is designed in a way that prevents the convergence of $x_k$ towards the local maximum points or saddle points of the function $f(x)$.
	\item Our method alleviates oscillations in the iteration process of \{$x_k$\}. Such oscillations, which occur near the local minimum point, would contradict the fundamental objective of minimizing the cost function.
	\item For some non-convex functions, given a judicious choice of initial value $x_0$, we can make \{$x_k$\} converge towards different local minimum points by adjusting the matrix $R$. A larger value of $R$ can cause \{$x_k$\} to converge to a local minimum point closer to $x_0$, while a smaller value of $R$ can enable \{$x_k$\} to converge to a local minimum point farther away from $x_0$. 

\end{itemize}

These characteristics are actually guaranteed by the cost function of the optimal control problem (\ref{4}).
It will be further demonstrated in the experimental results of Section \ref{222}.

\section{NUMERICAL EXPERIMENTAL} \label{222}
In this section, we present preliminary computational results for the numerical
performance analysis of our proposed method and demonstrate (a) the better convergence of our proposed method compared with gradient descent and Newton's method in both convex and non-convex functions, (b) the high accuracy of our method, (c) escaping saddle points or local maxima and (d) applicable to nonconvex functions and multivariable situation.
\subsection{Fast Convergence}
When $R$ is smaller, $x_k$ can converge the global or local minimum points with fewer iterations by using our method. Choosing the non-convex function 
\[f_1(x) = {x^4} + \sin {x}\] with the global unique minimum point at $x^*=-0.592$ and an initial value of $x_0 = 10$. We set $R = 1$ and $R=200$ for the optimal control method.
It can be seen from Fig. \ref{fig_3} that when $R=1$ the algorithm converges to the minimum point in nearly 10 iterations, while for $R=200$, it takes approximately 70 iterations to reach the same minimum point.
  Given the arbitrariness in the choice of $R$ in our method, it's advisable in general to opt for smaller values of $R$ to minimize the number of iterations.
\begin{figure}[htbp]
		\vspace{-0.4cm}
	\centering
	\includegraphics[width=0.4\textwidth,height=0.3\textwidth]{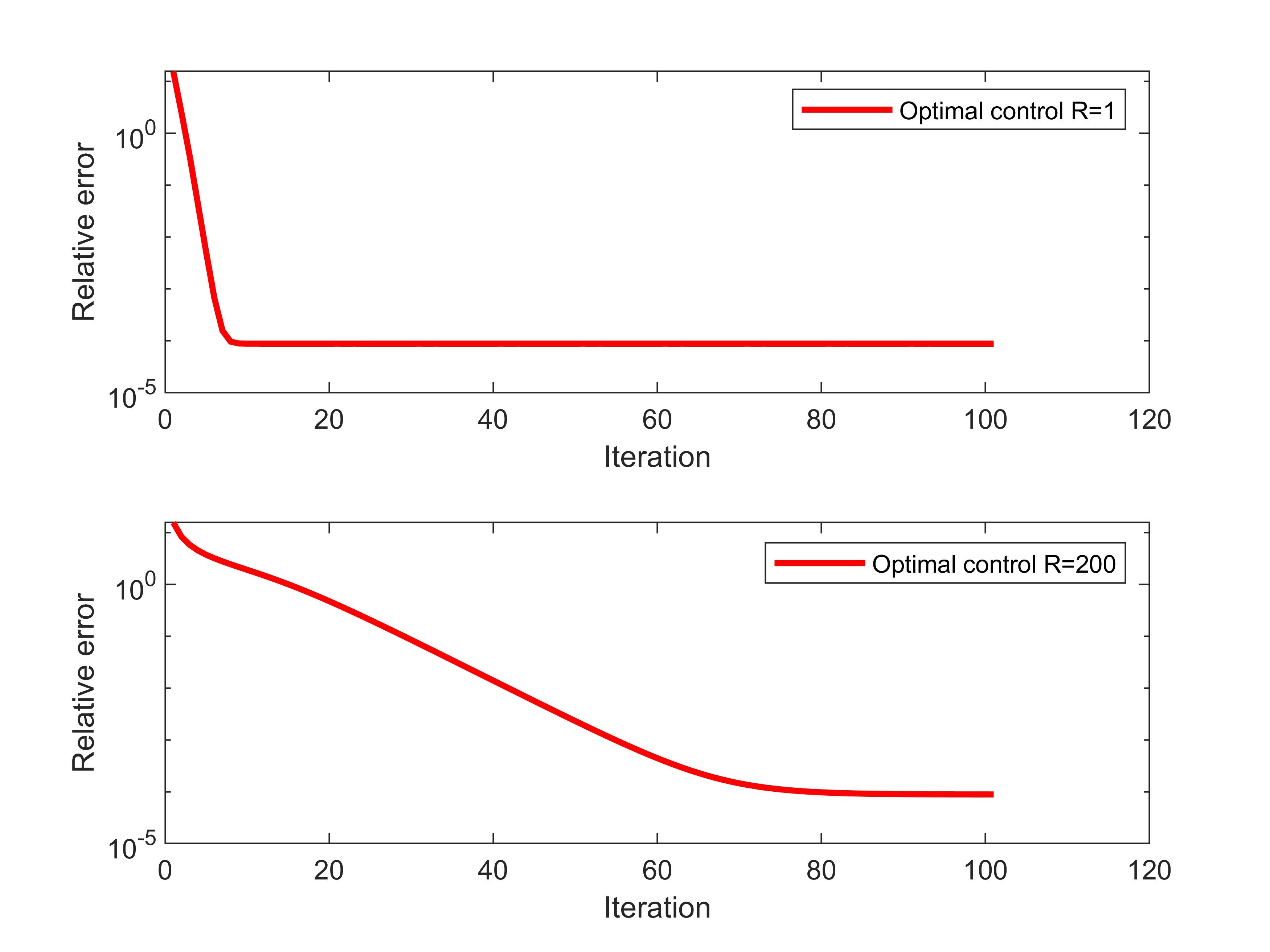}
	\caption{Iteration trajectory of \{$x_k$\} for $f_1(x)$ with $R=1$, $200$}
	\label{fig_3}
\end{figure}

 The basic formula for Newton's method is as follows:
\begin{equation}
	{x_{k + 1}} = {x_k} - {({\nabla ^2}f(x))^{ - 1}}\nabla f(x).
\end{equation}

Let's consider the case of a convex function. We choose the function 
\[{f_2}(x) = {e^x} + \sin x + {x^2}\] with an initial value of $x_0 = 3$ and global minimum point $x^*=-0.6558$.
 The initial step size for gradient descent is set to $\eta = 0.1$, and we set $R = 0.01$ for our method. As shown in Fig. \ref{fig_1}, our method and Newton's method converge in nearly 5 iterations. It's important to note that if the gradient descent step size is chosen too large, it can lead to oscillations during the iterative process.
\begin{figure}[htbp]
		\vspace{-0.4cm}
	\centering
	\includegraphics[width=0.4\textwidth,height=0.2\textwidth]{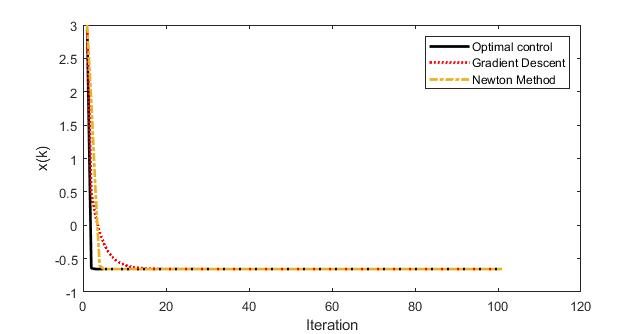}
	\caption{Iteration trajectory of \{$x_k$\} for $f_2(x)$ with $R=0.01$ and $\eta=0.1$}
	\label{fig_1}
\end{figure}

Taking a non-convex function \[f_3(x) = \ln ({x^2} + 1) + \ln ({(x - 1)^2} + 0.01)\] with the global unique minimum point at $x^*= 0.995$ and an initial value of $x_0 = 2$. Fig. \ref{fig_2} depicts the iterative trajectory of the gradient descent and our proposed method. The initial step size for gradient descent is set to $\eta = 0.01$ and $R = 0.01$ for our method. It's evident from the figures that the gradient descent experiences oscillations, whereas the optimal control algorithm achieves convergence in just about 5 iterations. Our method maintains a more favorable convergence behavior. Due to the convexity of $f_3(x)$, Newton's method diverges. We will not present the graphical results of Newton's method.
\begin{figure}[htbp]
		\vspace{-0.4cm}
	\centering
		\includegraphics[width=0.4\textwidth,height=0.2\textwidth]{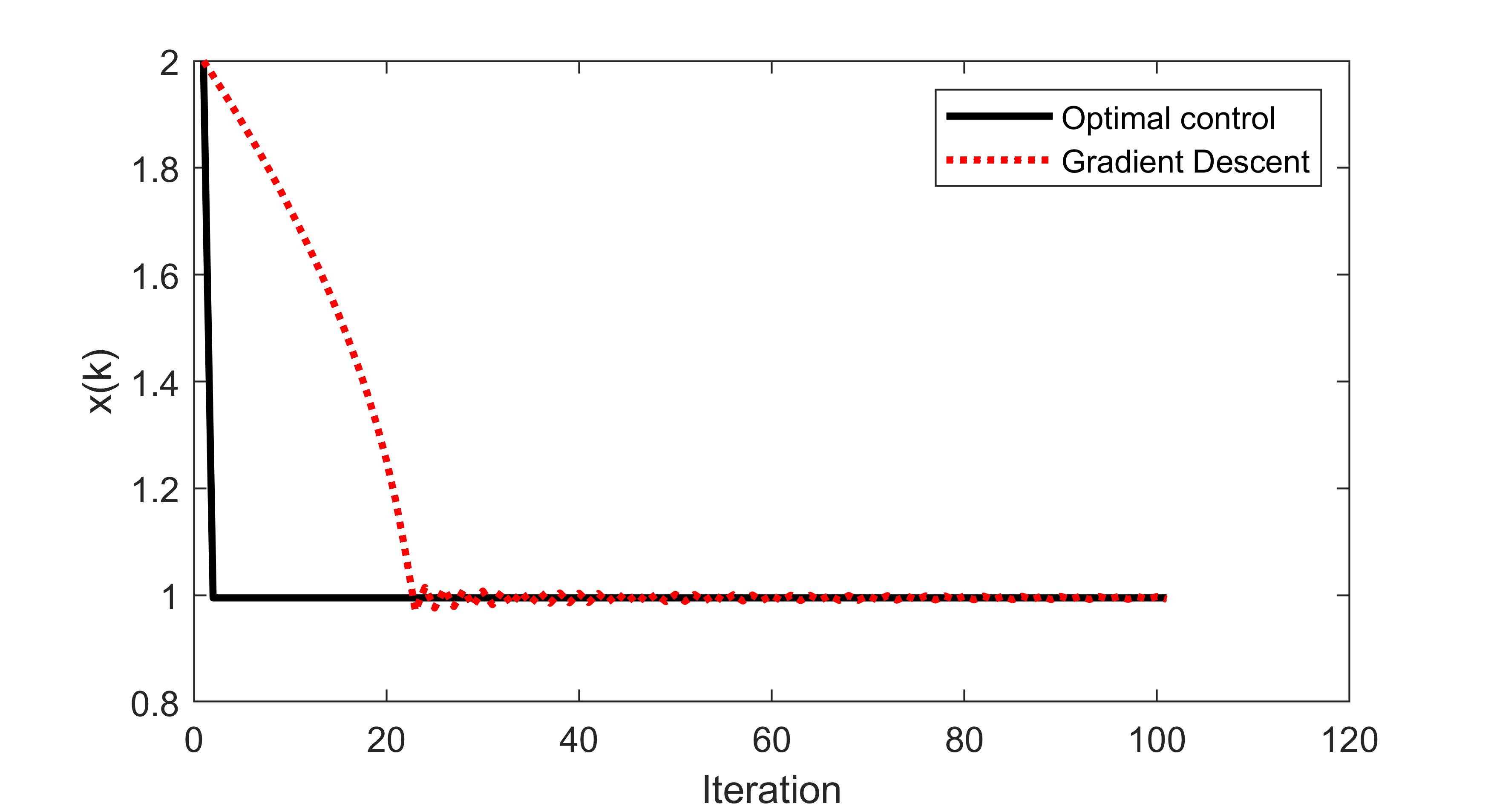}
	\caption{Iteration trajectory of \{$x_k$\} for $f_3(x)$ with $R=0.01$ and $\eta=0.01$}
	\label{fig_2}
\end{figure}

%

\subsection{High Accuracy}
This subsection will discuss the higher convergence accuracy of our proposed method compared to gradient descent and Newton's method.

Fig. \ref{fig_4} (a) shows the relative error of gradient descent, Newton's method, and our method for $f_1(x)$ with $\eta=0.001$ and $R=0.01$. Fig. \ref{fig_4} (b) illustrates the relative error of the gradient descent, Newton's method, and our method for $f_2(x)$ with $R=0.01$ and $\eta=0.1$. Fig. \ref{fig_4} (c) respectively shows the relative error of gradient descent and our method for $f_3(x)$ with $R=0.01$ and $\eta=0.01$, Newton's method diverges. It can be observed that our algorithm demonstrates higher precision.
\begin{figure}[htbp]
	\centering
		\subfloat[Relative error of \{$x_k$\} for $f_1(x)$ with $R=0.01$ and $\eta=0.001$]{
				\includegraphics[width=0.4\textwidth,height=0.25\textwidth]{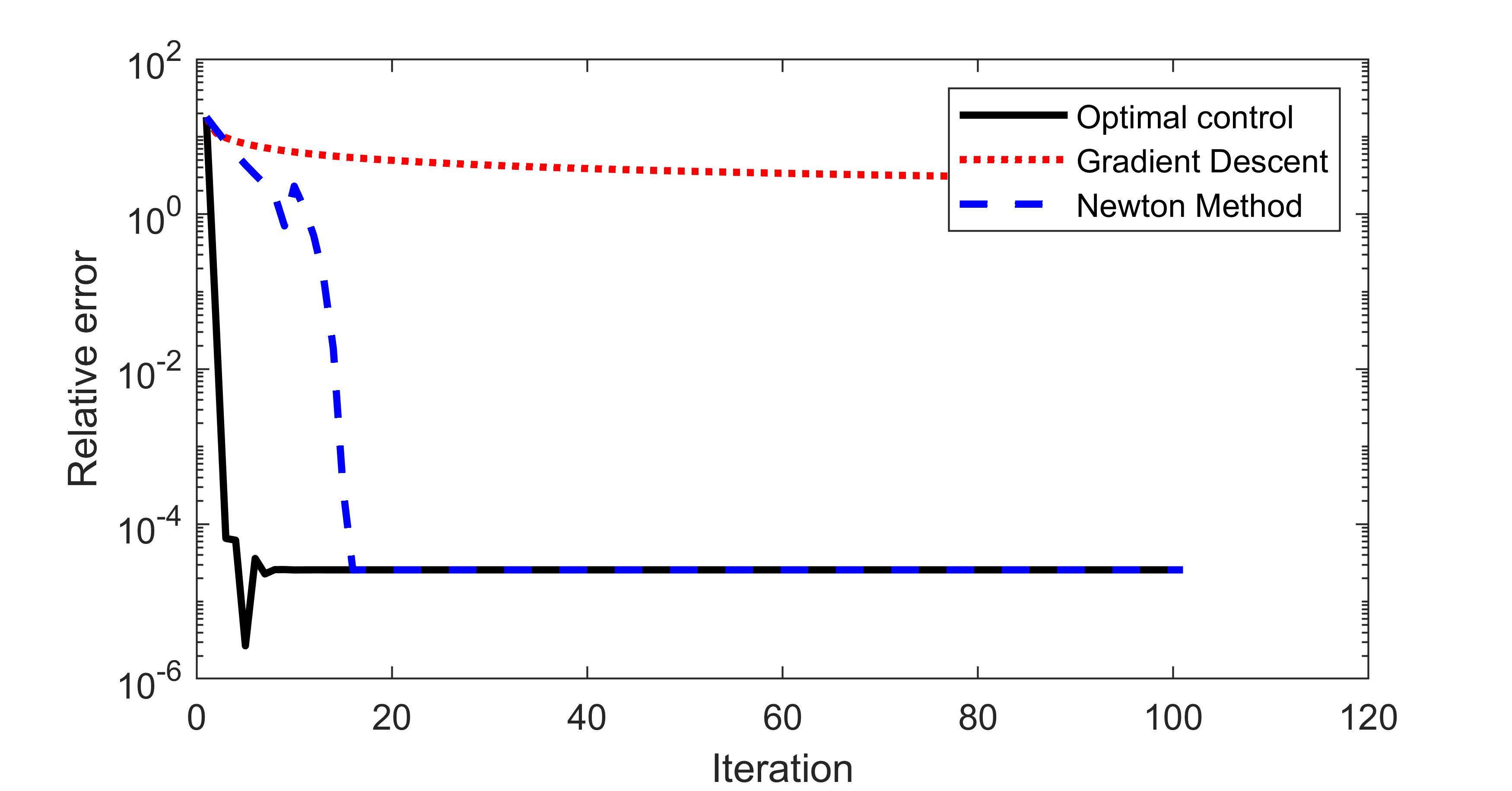}}\\
	\subfloat[Relative error of \{$x_k$\} for $f_2(x)$ with $R=0.01$ and $\eta=0.1$]{
			\includegraphics[width=0.4\textwidth,height=0.25\textwidth]{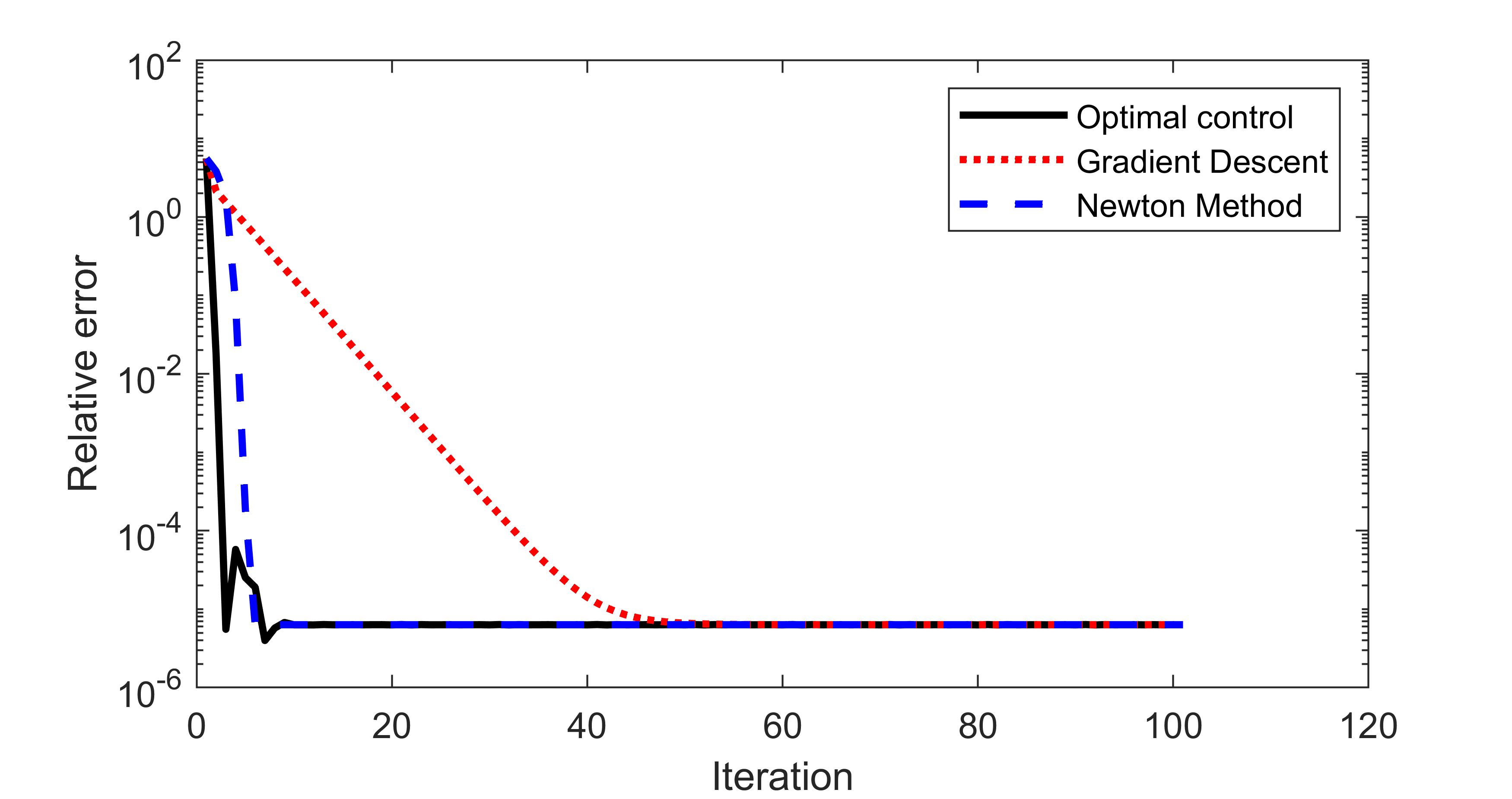}}\\
			\subfloat[Relative error of \{$x_k$\} for $f_3(x)$ with $R=0.01$ and $\eta=0.01$]{
			\includegraphics[width=0.4\textwidth,height=0.25\textwidth]{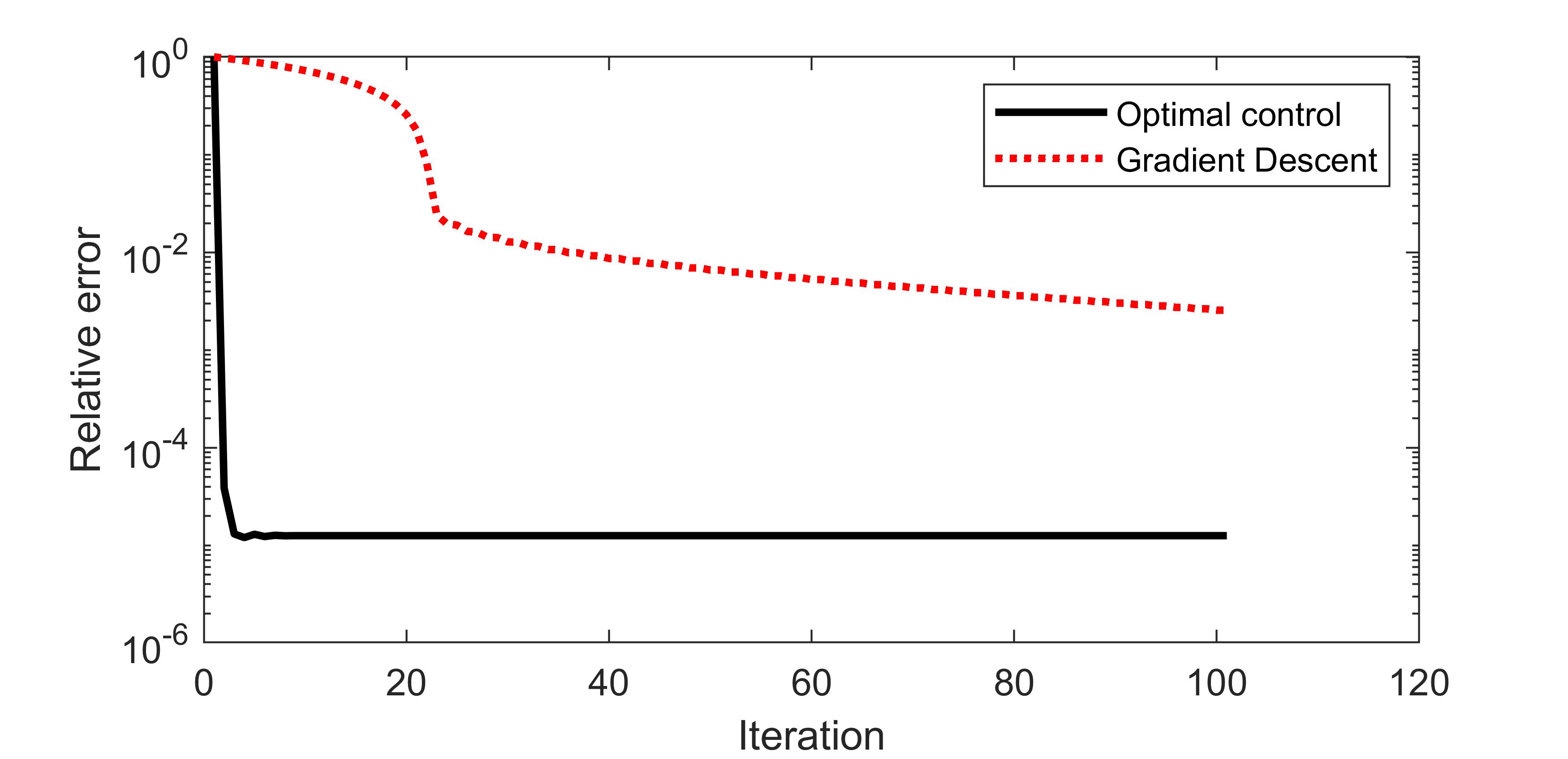}}
	\caption{Relative error $(({x_k} - {x^*})/{x^*}$)}
	\label{fig_4}
	\vspace{-0.6cm}
\end{figure}
\subsection{Escaping The Saddle Point}
We will show whether \{$x_k$\} from our proposed method can converge to the optimal solution when a saddle point is chosen as the initial value. Consider the  function 
\[f_4(x) = 7{x^3} + {x^4} + {{\rm{e}}^{{x^2}}} + {e^{ - {x^2}}}\]and set the initial value $x_0 = 0$, which is the saddle point of $f_4(x)$. This function has a global unique minimum point $x^*=-1.566$. The initial step size for gradient descent is set to $\eta = 0.026$ and we set $R = 0.026^{-1}$ correspondingly. It can be observed that the optimal control does not remain at the saddle point but converges towards the minimum point, whereas gradient descent and Newton's method remain at the saddle point in Fig. \ref{fig_6}.
\begin{figure}[H]
	\centering
	\includegraphics[width=0.4\textwidth,height=0.24\textwidth]{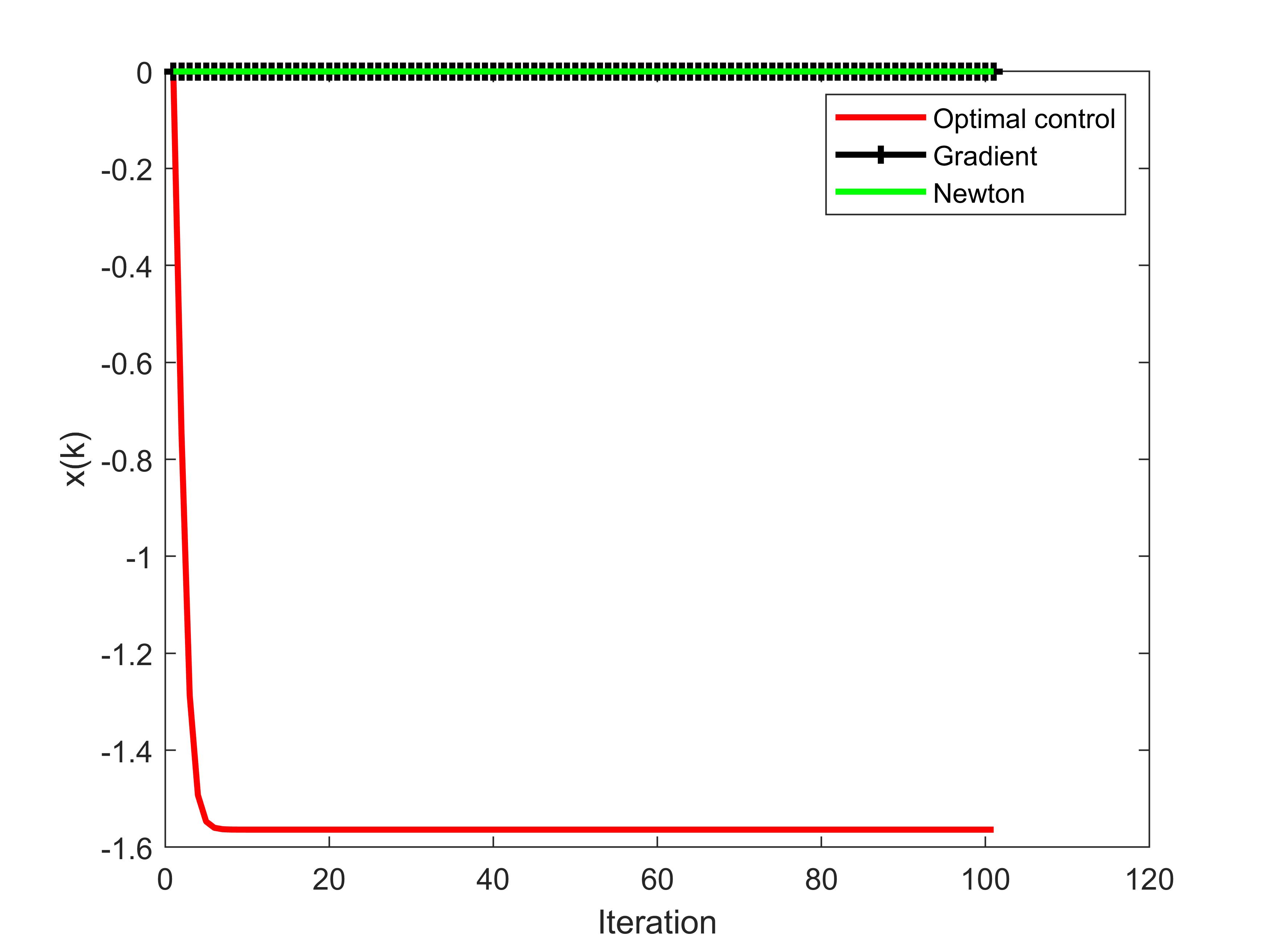}
	\caption{Iteration trajectory of \{$x_k$\} for $f_4(x)$ with $R=0.026^{-1}$ and $\eta=0.026$}
	\label{fig_6}
\end{figure}
\subsection{Applicable to Nonconvex Functions} \label{111}
  A larger value of $R$ can cause $x_k$ to converge to a local minimum point closer to $x_0$, while a smaller value of $R$ can lead $x_k$ to converge to a local minimum point farther away from $x_0$. 
 It becomes apparent that the $R$ is the weight of the problem (\ref{4}) as discussed in \ref{123}.
    We choose the function 
\[f_5(x) = x - 4{x^2} + 0.2{x^3} + 2{x^4}\] to illustrate this phenomenon. This function has a local minimum point $x^*_1=0.89$ and a global minimum point $x^*_2=-1.094$. We set $x_0 = -10$, $R = 100$ and $R = 0.1$. It can be observed that our method leads $x_k$ to converge to different local minimum points in Fig. \ref{fig_61}.
\begin{figure}[htbp]
	\vspace{-0.2cm}
	\centering
		\includegraphics[width=0.5\textwidth,height=0.84\textwidth]{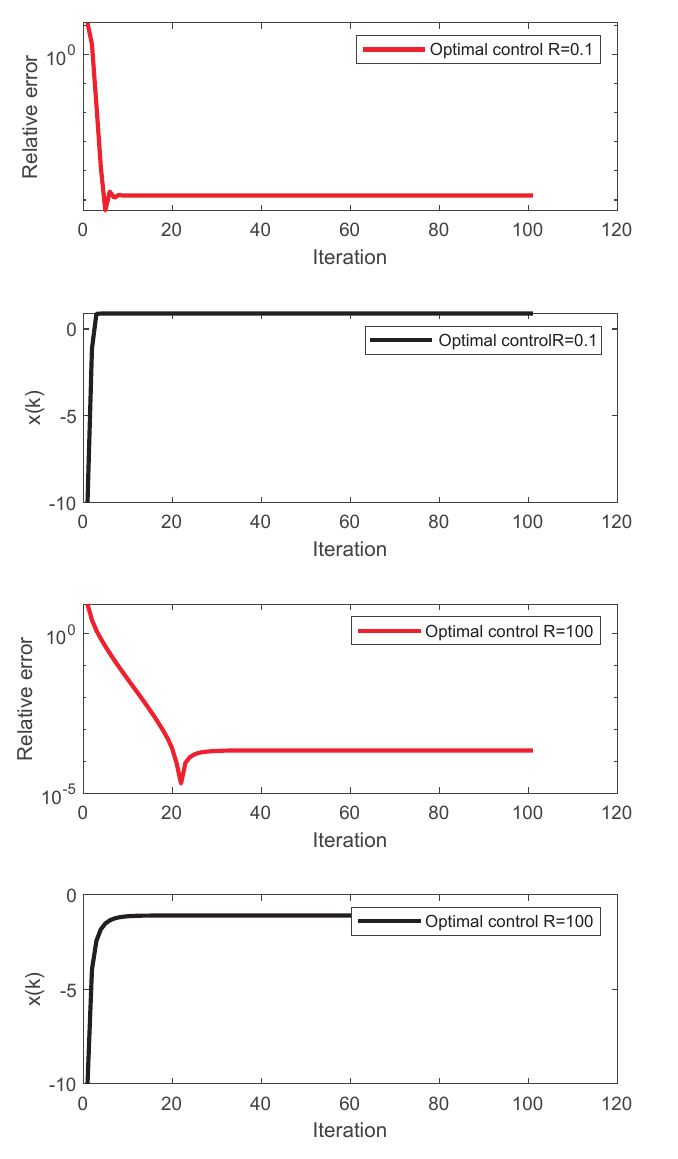}
		\caption{Iteration trajectory and relative error of \{$x_k$\} for $f_5(x)$ with $R=100,0.1$}
	\label{fig_61}
	\end{figure}

Consider the non-convex function
\[f_6(x) = (x - 1)(x + 1)(x + 0.5)(x + 1.5)(x - 0.5)(x - 1.5)\] that possesses three local minimum points $x^*_1=1.323$, $x^*_2=0$, $x^*_3=-1.323$. Take an initial value of $x_0=-3$. In the case of $R = 1,200,500$, \{$x_k$\} obtained from the proposed method convergence to $x^*_1$, $x^*_2$, and $x^*_3$ respectively. The corresponding iteration and relative error are shown in Fig. \ref{fig_7}.

\begin{figure}[H]
\vspace{-0.1cm}
	\centering
		\includegraphics[width=0.5\textwidth,height=0.943\textwidth]{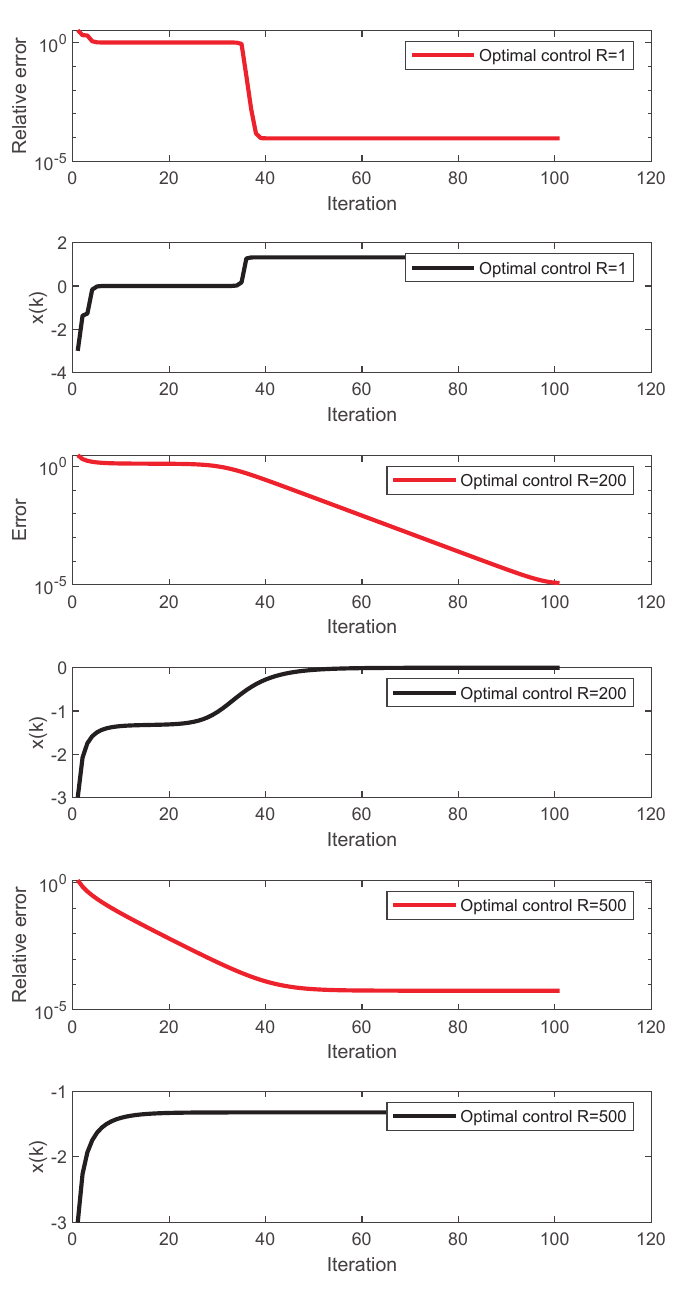}
	\caption{Iteration trajectory and relative error of \{$x_k$\} for $f_6(x)$ with $R=1,200,500$}
	\label{fig_7}
\end{figure}

\begin{remark}
	It can be observed that the $x_k$ moves to the local minimum point far away from $x_0$ when we use a smaller $R$, it may stay at other extreme points for a while and then leave, as shown in Fig. \ref{fig_7}. This is a very interesting thing and worthy of our subsequent research. In particular, it should also be pointed out that at present, we only know how to adjust $R$ from small to large, but the specific thresholdthat makes $x_k$ converge to different local minimum points remains to be studied and proved.
\end{remark}

\subsection{Applicable to Multivariable Function}

Finally, we will illustrate that our method is still valid for multivariable function by using a non-convex function 
\[{f_7}(x,y) = {x^4} + {y^4} + \sin x .\]
 We initialize with $[x_0;y_0]=[2;-2]$ and set $R=0.12^{-1}{I_2}$, $\eta=0.12$. This function has the global unique minimum point $[-0.592;0]$. The results depicted in Fig. \ref{fig_13} demonstrate that our method exhibits almost no oscillations compared to gradient descent.
\begin{figure}[htbp]
		\centering
		\includegraphics[width=0.5\textwidth,height=0.3\textwidth]{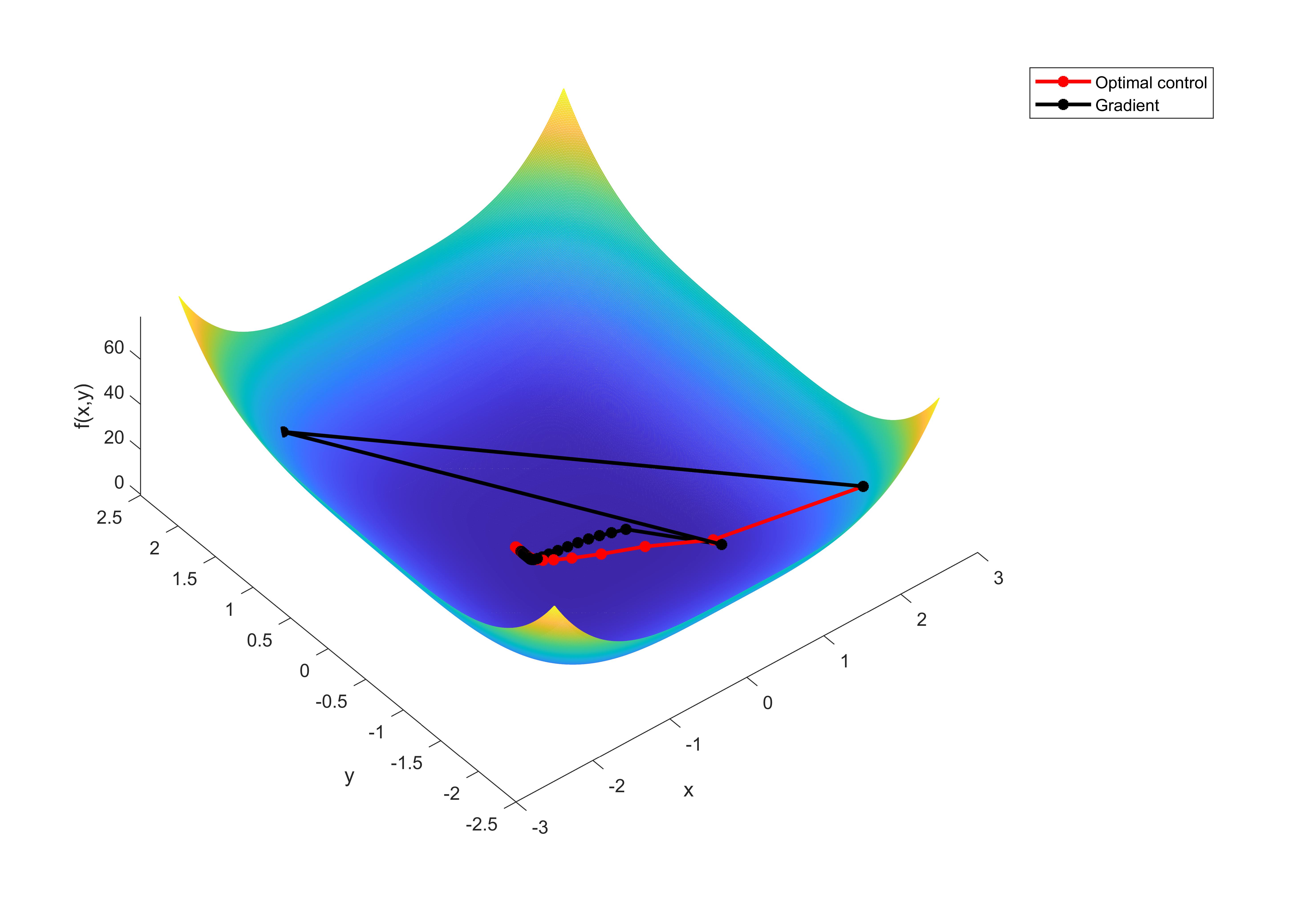}
		\caption{Iteration trajectory of \{$x_k$\} for $f_7(x)$ with 
			$R={0.12^{ -1}}{I_2}$ and $\eta=0.12$}
		\label{fig_13}
	\end{figure}

In order to make \{$x_k$,$y_k$\} converge towards different local minimum points by adjusting the matrix $R$, We adopt an alternating optimization approach.
Let's consider the function
\[{f_8}(x,y) = \ln ({x^2} + {y^2} + 1) + \ln ({(x - 10)^2} + {(y - 10)^2} + 1)\]
\qquad\qquad ${\rm{     }}+ \ln ({(x - 2)^2} + {(y - 30)^2} + 1)$

\noindent with $[x_0;y_0]=[-20;40]$. This function has three local mimimum points $[x^*_1;y^*_1]=[2;29.9]$, $[x^*_2;y^*_2]=[0.05;0.08]$ and $[x^*_3;y^*_3]=[9.93;9.99]$.
Different from the single variable function optimization, the carried out by decomposing it along the two directions of $x$ axis and $y$ axis.
The iteration will be decomposed into a series of steps, where each variable is optimized separately. Initially, the algorithm optimizes in the $x$ axis direction or the $y$ axis direction, solving the FBDEs separately in each direction. Finally solving FBDEs in two directions to achieve convergence towards the direction of different local minimum points. Refer to Fig. \ref{fig_14} for detailed visualizations.

\begin{figure}[htbp]
	\centering
	\subfloat[{Iteration trajectory converges to $[x^*_3;y^*_3]$}]{
		\includegraphics[width=0.5\textwidth,height=0.3\textwidth]{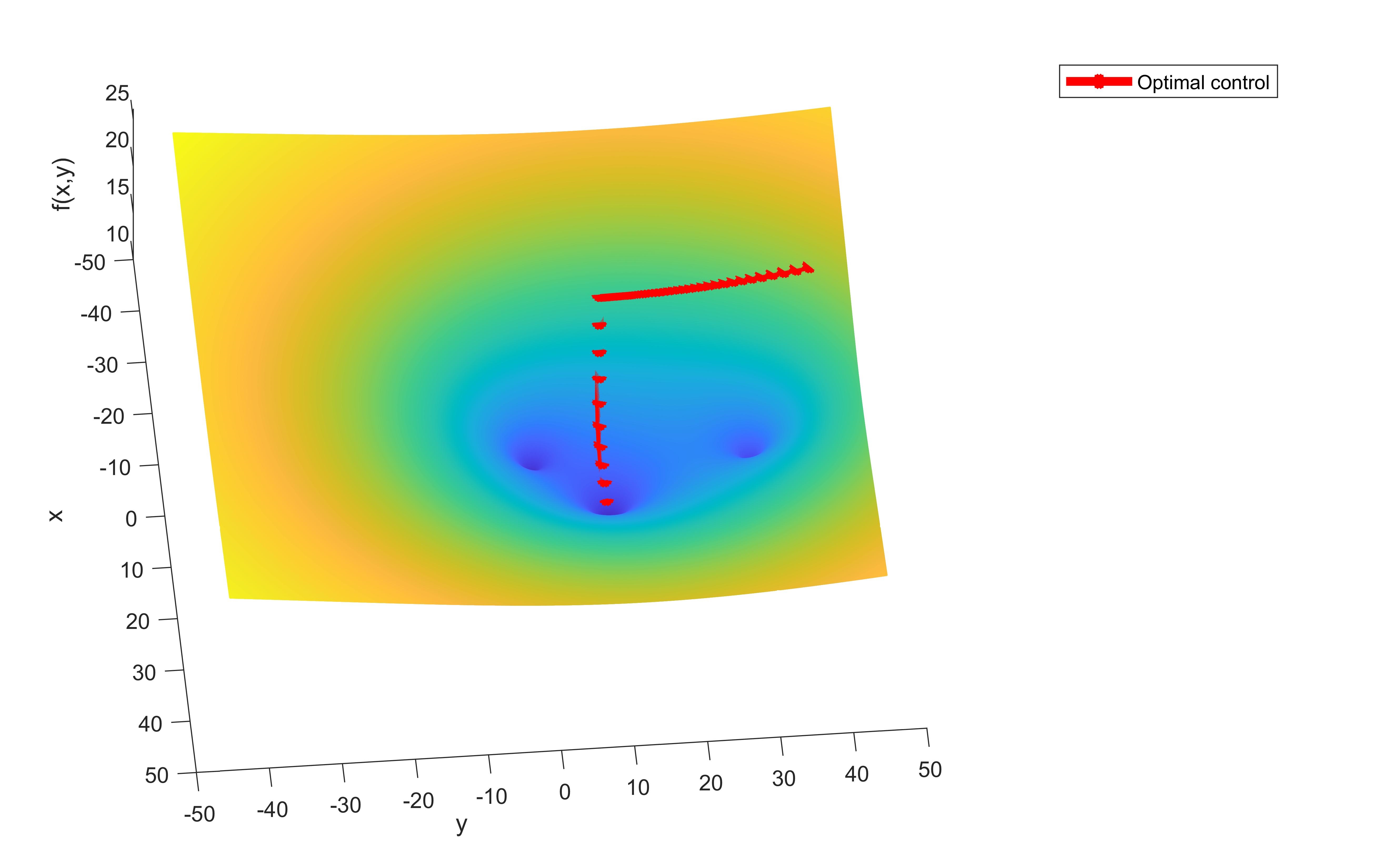}}\\
	\subfloat[{Iteration trajectory converges to $[x^*_2;y^*_2]$}]{
		\includegraphics[width=0.5\textwidth,height=0.3\textwidth]{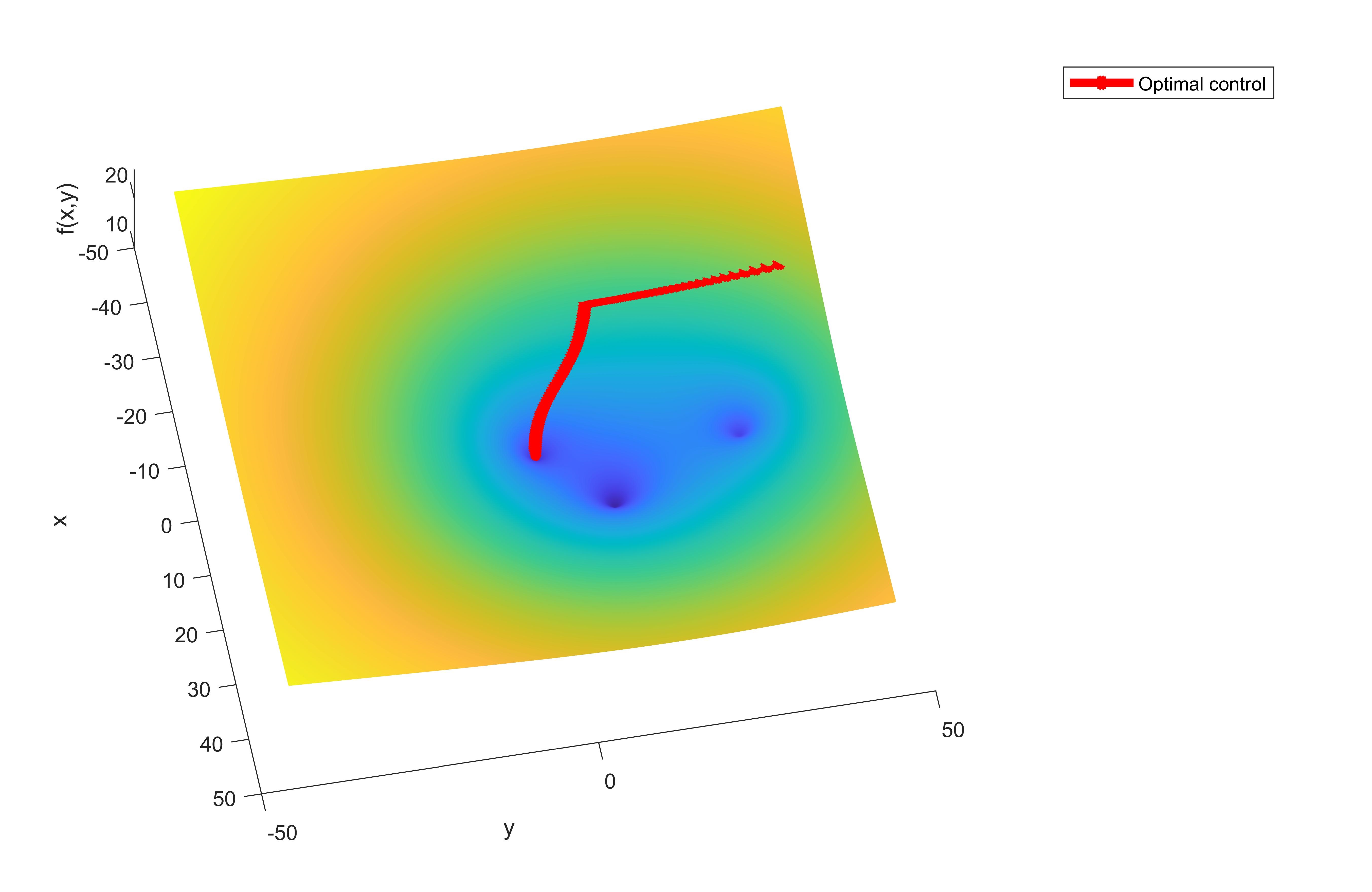}}\\
	\subfloat[{Iteration trajectory converges to $[x^*_1;y^*_1]$}]{
		\includegraphics[width=0.5\textwidth,height=0.3\textwidth]{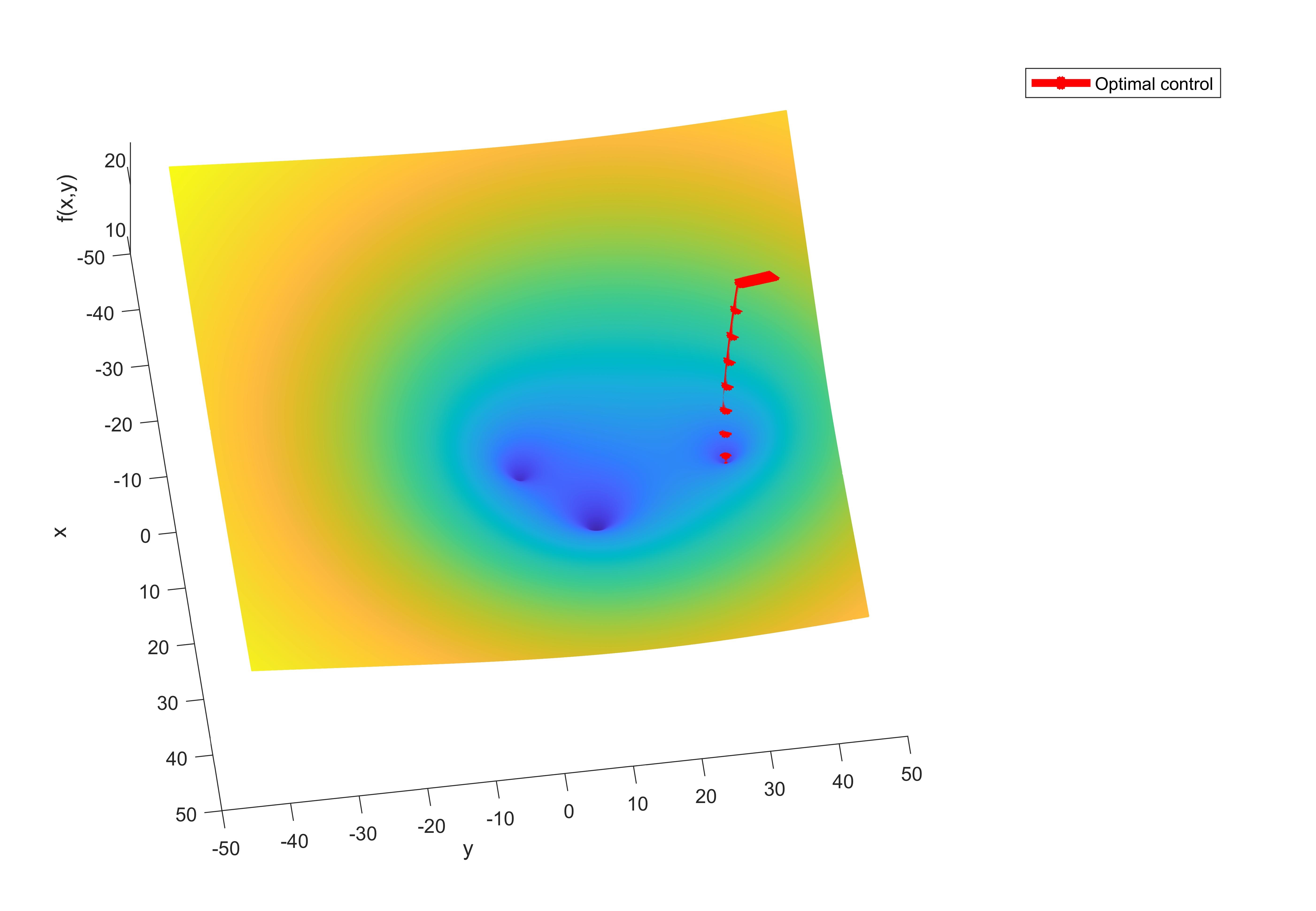}}\\
	\caption{Iteration trajectory of \{$x_k$\} for $f_8(x)$}
	\label{fig_14}
\end{figure}

In Fig. \ref{fig_14} (a), we first fix the variable $x$ and move it along the $y$ axis direction by setting $R = 1$, it reaches the point $[-20;11]$. Then we make its move along the $x, y$ axis direction by setting $R = I_2$ to reach the local minimum point $[9.93;9.99]$. Following an approach similar to Fig. \ref{fig_14} (a), we initially move it along the $y$ direction with $R = 1$, ultimately reaching the point $[-20;11]$. Then we make its move along the $x, y$ axis direction by setting $R =$ [100 0;0 0.00001] to reach the local minimum point $[0.05;0.08]$ as shown in Fig. \ref{fig_14} (b). In Fig. \ref{fig_14} (c), we first make its move along the $y$ direction and set $R = 100$, it reaches the point $[-20;35]$. Then we make its move along the $x, y$ axis direction by setting $R = I_2$ to reach the local minimum point $[2;29.9]$.



\section{CONCLUSIONS}
In this paper, we have proposed the method based on optimal control as a novel approach to tackle the optimization problem by designing an appropriate cost function. Our method has demonstrated promising convergence performance and versatility, enabling us to apply this principle to solve various optimization problems.
In the future, we plan to extend our method to systems with additive noise or time-varying input weight matrices $R$. We will also expand our analysis to address challenges in distributed optimization and explore policy optimization (PO) methods for Linear Quadratic Regulators (LQR) and other related problems.
 The execution of the optimal control algorithm involves solving the FBDEs, which can be time-consuming. To address this, it is essential to choose appropriate methods to simplify the solving process. We will continue our research into algorithms for solving these equations, aiming to enhance the computational speed of our method.

\bibliographystyle{IEEEtran}
\bibliography{An_bib}
%

\end{document}